\def\vol{\mathrm{Vol}}
\newtheorem{theorem}{Theorem}[section]
\newtheorem{lemma}[theorem]{Lemma}
\newtheorem{corollary}[theorem]{Corollary}
\newtheorem{proposition}[theorem]{Proposition}
\theoremstyle{definition}
\newtheorem{definition}[theorem]{Definition}
\newtheorem*{example}{Example}
\newtheorem{claim}{Claim}
\newcommand{\Ric}{{\rm Ric}}
\newcommand{\grad}{{\rm grad}\,}
\newcommand{\n}{\nabla}
\newcommand{\dive}{{\rm div}\,}
\newcommand{\tr}{{\rm Trace}\,}
\newcommand{\Rm}{{\rm Rm}\,}
\begin{document}

\title[Self-similar solutions to the Hesse flow]
{Self-similar solutions to the Hesse flow}



\author{Shun Maeta}
\address{Department of Mathematics,
 Shimane University, Nishikawatsu 1060 Matsue, 690-8504, Japan.}
\curraddr{}
\email{shun.maeta@gmail.com~{\em or}~maeta@riko.shimane-u.ac.jp}
\thanks{The author is partially supported by the Grant-in-Aid for Young Scientists, No.19K14534, Japan Society for the Promotion of Science.}
\subjclass[2010]{53B12, 53E99, 35C06, 62B11}

\date{}

\dedicatory{}

\keywords{Hesse flow; Hesse solitons; Hessian manifolds; Information geometry}

\commby{}

\begin{abstract}
We define a Hesse soliton, that is, a self-similar solution to the Hesse flow on Hessian manifolds.
On information geometry, the $e$-connection and the $m$-connection are important, which do not coincide with the Levi-Civita one. 
Therefore, it is interesting to consider a Hessian manifold with a flat connection which does not coincide with the Levi-Civita one. We call it a proper Hessian manifold. 
In this paper, we show that any compact proper Hesse soliton is expanding and any non-trivial compact gradient Hesse soliton is proper. Furthermore, we show that the dual space of a Hesse-Einstein manifold can be understood as a Hesse soliton.
\end{abstract}

\maketitle


\bibliographystyle{amsplain}

\section{Introduction}\label{intro}

Machine learning might be one of the most powerful tool for the human race.
Information geometry is regarded as a basic and important mathematical research in this field, which was introduced by S. Amari.
In particular, Exponential families and Mixture families of probability distributions are important in information geometry. They have the dual flat structure. Hence, interestingly, they have Hessian structure.

Information geometry is built on the basis of differential geometry.  
The geometric flow is one of the most powerful tool in the theory of differential geometry. In particular, the Ricci flow is powerful.
In fact, as is well known, Poincare conjecture was solved by the Ricci flow.
The Ricci flow is defined as follows by using the Ricci tensor $\Ric(g(t))$ (cf. \cite{Hamilton82}): 
$$\frac{\partial}{\partial t}g(t)=-2\Ric(g(t)),$$
where $g(t)$ is the time dependent Riemannian metric on a Riemannian manifold $(M,g(t))$.
Therefore, Hessian manifolds can be deeply understood by considering the geometric flow for the second Koszul form $\beta$. In fact, $\beta$ plays a similar role to that of the Ricci tensor in a Hessian manifold (i.e., a manifold with a Hessian structure).
The flow is called the Hesse flow (or the Hesse-Koszul flow) defined by M. Mirghafouri and F. Malek (cf. \cite{MM17}, see also \cite{PT20}):
$$\frac{\partial}{\partial t}g(t)=2\beta(g(t)).$$
They proved the short-time existence, the global existence and the uniqueness of it. 
S. Puechmorel and T. D. T\^o \cite{PT20} studied some convergence theorems of it on compact Hessian manifolds.

A self-similar solution i.e., a soliton equation plays an important and fundamental role in the study of a geometric flow.
In fact, the Ricci soliton which is the self-similar solution to the Ricci flow plays an important role in solving Poincare conjecture and the geometrization conjecture. 

Therefore, in this paper, we define the self-similar solution to the Hesse flow and study it.
In particular, we show that the expanding case is important in compact Hesse solitons. We also show that one can understand that a non trivial gradient Hesse soliton is interesting from the point of view information geometry. In particular, under the second Koszul form coincides with the dual one, any compact gradient Hesse soliton must be Hesse-Einstein. Hesse-Einstein manifolds can be regarded to a notion of Einstein manifolds in Riemannian geometry.
Furthermore, we show that one can understand the dual space of a Hesse-Einstein manifold as a Hesse soliton.


\quad\\

\section{Preliminary}
In this section, we set up terminology and define some notions which are related to Hessian manifolds and information geometry. 

\quad\\
\subsection{Riemannian geometry}~

Let $(M,g)$ be an $n$-dimensional Riemannian manifold. As is well known, the Levi-Civita connection $\nabla:TM\times C^\infty(TM)\rightarrow C^\infty(TM)$ is the unique connection on $TM$, which is compatible with the metric and is torsion free:
$$Xg(Y,Z)=g(\nabla_XY,Z)+g(Y,\nabla_XZ),$$
$$\nabla_XY-\nabla_YX=[X,Y].$$
The Riemannian curvature tensor is defined by 
$$Rm(X,Y)Z=\n_X\n_YZ-\n_Y\n_XZ-\n_{[X,Y]}Z,$$
for any vector field $X,Y,Z\in\mathfrak{X}(M).$
We use the notations ${R^i}_{jkl}$ as
$$Rm\left(\frac{\partial}{\partial x_k},\frac{\partial}{\partial x_l}\right)\frac{\partial}{\partial x_j}=\sum_i^n{R^i}_{jkl}\frac{\partial}{\partial x_i},$$
and 
$R_{ijkl}=g^{ip}{R^p}_{jkl}$.
The Ricci and scalar curvatures are defined by $R_{ij}=R_{ijik}$ and $R=R_{ii}$.

\quad\\

\subsection{Hessian manifolds}~

Let $M$ be an $n$-dimensional smooth manifold. A connection $D$ is said to be flat if $D$ satisfies that it is torsion free and the curvature tensor $Rm^D$ vanishes, that is,
$$D_XY-D_YX=[X,Y],$$
and 
$$Rm^D(X,Y)Z:=D_XD_YZ-D_YD_XZ-D_{[X,Y]}Z=0.$$

\begin{definition}
A Riemannian metric $g$ on a flat manifold $(M,D)$ is called a Hessian metric if $g$ can be locally expressed by
$$g=Dd\varphi,$$
for some smooth function $\varphi$, that is, 
$$g_{ij}=\frac{\partial^2 \varphi}{\partial x_i\partial x_j},$$
for an affine coordinate system. $(D,g)$ is called a Hessian structure. $(M,D,g)$ is called a Hessian manifold.
\end{definition}

Let $D'=2\nabla-D$, then $D'$ is also a flat connection and $(D',g)$ is a Hessian structure. $D'$ is called the dual connection and $(D',g)$ is called the dual Hessian structure of $(D,g)$.

The flat connection $D$ and the dual one $D'$ satisfy that
$$Xg(Y,Z)=g(D_XY,Z)+g(Y,D'_XZ).$$

\begin{definition}
Let $v_g$ be the volume form of $g$ and $X$ be a vector field on $M$. The first and second Koszul forms $\alpha$ and $\beta$ of $(M,D)$ are defined by 
\begin{equation}\label{1st}
D_Xv_g=\alpha(X)v_g,
\end{equation}
\begin{equation}\label{2nd}
\beta=D\alpha.
\end{equation}
We denote by $\gamma$ the difference tensor of $\nabla$ and $D$:
$$\gamma_XY=\nabla_XY-D_XY.$$
Here we remark that since $D_{\partial_i}\partial_j=0$, the components ${\gamma^i}_{jk}$ of $\gamma$ with respect to affine coordinate systems coincide with the Christoffel symbols ${\Gamma^i}_{jk}$ of $\nabla$, where $\partial_i=\frac{\partial}{\partial x_i}$.

A tensor field $H$ of type $(1,3)$ defined by the covariant differential 
$$H=D\gamma$$
of $\gamma$ is said to be the Hessian curvature tensor for $(D,g)$.
The components ${H^i}_{jkl}$ of $H$ is given by
$${H^i}_{jkl}=\frac{\partial {\gamma^i}_{jl}}{\partial x_k}.$$
\end{definition}

\begin{proposition}[Proposition 3.4 in \cite{Shima07}]\label{ab} On a Hessian manifold, the following holds:

$(1)$ $\alpha(X)=\tr \gamma_X$.

$(2)$ $\alpha_i={\gamma^r}_{ri}$.

$(3)$ $\beta_{ij}={H^r}_{rij}={H_{ijr}}^r$.
\end{proposition}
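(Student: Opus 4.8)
The plan is to fix once and for all an affine coordinate system $\{x_i\}$ for the flat connection $D$, so that the Christoffel symbols of $D$ vanish and hence $D_{\partial_i}\partial_j=0$. In such coordinates every $D$-covariant derivative degenerates to an ordinary partial derivative, and all three statements reduce to short computations. I would deduce $(2)$ from $(1)$ by evaluating on coordinate vector fields, and then read off most of $(3)$ from $(2)$ together with the coordinate formula ${H^i}_{jkl}=\partial{\gamma^i}_{jl}/\partial x_k$.

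For $(1)$ and $(2)$: in the chosen coordinates the Riemannian volume form is $v_g=\sqrt{\det(g_{kl})}\,dx_1\wedge\cdots\wedge dx_n$, and since $D_{\partial_i}dx_j=0$, applying $D_{\partial_i}$ and comparing with \eqref{1st} gives
\begin{equation*}
\alpha_i\,v_g=D_{\partial_i}v_g=\partial_i\bigl(\log\sqrt{\det(g_{kl})}\bigr)\,v_g=\tfrac12\,g^{kl}(\partial_i g_{kl})\,v_g .
\end{equation*}
By the standard identity $\tfrac12 g^{kl}\partial_i g_{kl}={\Gamma^k}_{ki}$ and the remark following \eqref{2nd} (the components of $\gamma$ coincide with the Christoffel symbols of $\nabla$ in affine coordinates), the right-hand coefficient equals ${\gamma^k}_{ki}$; and since $\gamma_{\partial_i}\partial_j={\gamma^k}_{ji}\partial_k$ this is exactly $\tr\gamma_{\partial_i}$. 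Hence $\alpha_i={\gamma^r}_{ri}=\tr\gamma_{\partial_i}$, which is $(2)$, and $(1)$ follows since both $\alpha(\cdot)$ and $X\mapsto\tr\gamma_X$ are linear: $\alpha(X)=X^i\alpha_i=X^i\tr\gamma_{\partial_i}=\tr\gamma_X$.

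For $(3)$: since $\beta=D\alpha$ by \eqref{2nd} and $D_{\partial_i}\partial_j=0$,
\begin{equation*}
\beta_{ij}=(D_{\partial_i}\alpha)(\partial_j)=\partial_i\alpha_j-\alpha(D_{\partial_i}\partial_j)=\partial_i\alpha_j=\partial_i{\gamma^r}_{rj},
\end{equation*}
where the last step uses $(2)$. Comparing with ${H^r}_{rij}=\partial_i{\gamma^r}_{rj}$ (the defining formula for $H$ with the upper index and the first lower index both set equal to $r$) yields $\beta_{ij}={H^r}_{rij}$. For the remaining equality ${H_{ijr}}^r={H^r}_{rij}$ it is enough to know the symmetry $H_{ijkl}=H_{klij}$: granting it, $g^{rs}H_{ijrs}=g^{rs}H_{rsij}={H^s}_{sij}$. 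This symmetry is elementary in the Hessian setting: since $g_{ij}=\partial_i\partial_j\varphi$, the triple $\gamma_{ijk}:=g_{ip}{\gamma^p}_{jk}$ equals $\tfrac12\partial_i\partial_j\partial_k\varphi$ and is therefore totally symmetric, and a short computation then gives the closed form
\begin{equation*}
H_{ijkl}=\tfrac12\,\partial_i\partial_j\partial_k\partial_l\varphi-\tfrac12\,g^{pq}(\partial_i\partial_k\partial_p\varphi)(\partial_j\partial_l\partial_q\varphi),
\end{equation*}
from which $H_{ijkl}=H_{klij}$ (and the further symmetries $H_{ijkl}=H_{kjil}=H_{ilkj}$) is manifest. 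Combining, $\beta_{ij}={H^r}_{rij}={H_{ijr}}^r$.

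The only piece that is more than routine is the verification of the symmetry of the Hessian curvature tensor (equivalently, the closed form displayed above); everything else is forced by the single fact that affine coordinates annihilate the Christoffel symbols of $D$. I do not expect any genuine obstacle beyond keeping the four index slots of $H$ straight.
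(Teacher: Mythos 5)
The paper does not prove this proposition at all: it is imported verbatim from Shima's book (Proposition~3.4 of \cite{Shima07}) as a quoted fact, so there is no in-paper argument to compare against. Your blind proof is a correct, self-contained verification. The key points all check out: $dx_1\wedge\cdots\wedge dx_n$ is $D$-parallel in affine coordinates, so $\alpha_i=\partial_i\log\sqrt{\det(g_{kl})}=\tfrac12 g^{kl}\partial_i g_{kl}={\Gamma^k}_{ki}={\gamma^k}_{ki}$, which gives $(1)$ and $(2)$ (your writing $\gamma_{\partial_i}\partial_j={\gamma^k}_{ji}\partial_k$ rather than ${\gamma^k}_{ij}\partial_k$ is harmless since torsion-freeness of $\nabla$ and $D$ makes $\gamma$ symmetric in its lower indices); and your closed form for $H_{ijkl}$ agrees with $\partial_k\gamma_{ijl}-2g^{pq}\gamma_{ipk}\gamma_{qjl}$ computed from the total symmetry of $\gamma_{ijk}=\tfrac12\partial_i\partial_j\partial_k\varphi$, so the pair symmetry $H_{ijkl}=H_{klij}$ and hence ${H_{ijr}}^r={H^r}_{rij}$ follows. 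This is essentially the standard derivation one finds in Shima's book, so there is nothing further to reconcile.
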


The second Koszul form $\beta$ plays a similar role to that of the Ricci tensor and one can define Hesse-Einstein manifolds:

\begin{definition}
Let $(M,D,g)$ be a Hessian manifold. If the following holds
$$\beta=\lambda g,$$
then $(M,D,g)$ is called a Hesse-Einstein manifold.
\end{definition}

\begin{proposition}[Proposition 2.3 in \cite{Shima07}]\label{curv}
The curvature tensor of a Hessian manifold $(M,D)$ is given as follows.

\begin{equation}\label{rm}
{R^{i}}_{jkl}={\gamma^i}_{lr} {\gamma^r}_{jk} -{\gamma^i}_{kr} {\gamma^r
}_{jl}.
\end{equation}
\end{proposition}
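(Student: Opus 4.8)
The plan is to verify the identity pointwise in a local affine coordinate system $(x_1,\dots ,x_n)$ for the flat connection $D$, which exists by flatness of $D$; since $\gamma=\nabla-D$ is a genuine tensor, the expression ${\gamma^i}_{lr}{\gamma^r}_{jk}$ is a legitimate contraction and both sides of the claimed formula are components of tensors, so checking it in one such chart at each point suffices. In affine coordinates $D_{\partial_i}\partial_j=0$, so, as remarked after the definition of $\gamma$, the ${\gamma^i}_{jk}={\Gamma^i}_{jk}$ are the Christoffel symbols of $\nabla$ and in particular ${\gamma^i}_{jk}={\gamma^i}_{kj}$. First I would write the standard coordinate formula for the Riemann tensor, with the index convention fixed in Section~2, as
\[
{R^i}_{jkl}=\partial_k{\gamma^i}_{lj}-\partial_l{\gamma^i}_{kj}+{\gamma^i}_{kr}{\gamma^r}_{lj}-{\gamma^i}_{lr}{\gamma^r}_{kj},
\]
so that everything reduces to evaluating the derivative terms $\partial_k{\gamma^i}_{lj}-\partial_l{\gamma^i}_{kj}$.

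The decisive input is the Hessian condition $g_{ij}=\partial_i\partial_j\varphi$, which makes $\partial_k g_{ij}$ totally symmetric in $(i,j,k)$; equivalently the fully lowered tensor $\gamma_{ijk}:=g_{ip}{\gamma^p}_{jk}=\tfrac12\partial_i g_{jk}=\tfrac12\partial_i\partial_j\partial_k\varphi$ is totally symmetric, and hence $\partial_l\gamma_{ijk}$ is symmetric in all four indices. Writing ${\gamma^i}_{lj}=g^{ir}\gamma_{rlj}$ and using $\partial_k g^{ir}=-g^{ip}g^{rq}\partial_k g_{pq}=-2g^{ip}g^{rq}\gamma_{kpq}$, a short computation gives
\[
\partial_k{\gamma^i}_{lj}=g^{ir}\partial_k\gamma_{rlj}-2{\gamma^i}_{kr}{\gamma^r}_{lj}.
\]
The first term $g^{ir}\partial_k\gamma_{rlj}=\tfrac12 g^{ir}\partial_k\partial_r\partial_l\partial_j\varphi$ is symmetric under $k\leftrightarrow l$, so it cancels in the antisymmetrized combination, leaving $\partial_k{\gamma^i}_{lj}-\partial_l{\gamma^i}_{kj}=-2{\gamma^i}_{kr}{\gamma^r}_{lj}+2{\gamma^i}_{lr}{\gamma^r}_{kj}$.

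Substituting this back into the coordinate formula and using ${\gamma^r}_{lj}={\gamma^r}_{jl}$ and ${\gamma^r}_{kj}={\gamma^r}_{jk}$, the coefficients combine as $-2+1=-1$ and $2-1=+1$, producing exactly ${R^i}_{jkl}={\gamma^i}_{lr}{\gamma^r}_{jk}-{\gamma^i}_{kr}{\gamma^r}_{jl}$, which is \eqref{rm}. I do not expect a genuine obstacle here, as the argument is local and algebraic; the one point worth emphasizing is the cancellation of the genuinely second-order term $g^{ir}\partial_k\gamma_{rlj}$. Without the total symmetry of $\partial_k g_{ij}$ supplied by the Hessian structure this term would not drop out — it is precisely (the antisymmetrization of) the Hessian curvature tensor $H=D\gamma$ — and its disappearance is what collapses the Riemann tensor to a purely quadratic expression in $\gamma$. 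An essentially equivalent route is to expand $Rm^\nabla$ for $\nabla=D+\gamma$ as $Rm^D$ plus terms in $D\gamma$ plus terms quadratic in $\gamma$, observe $Rm^D=0$ since $D$ is flat, and then eliminate the $D\gamma=H$ contribution using the same symmetry; I would use whichever bookkeeping comes out shorter on the page.
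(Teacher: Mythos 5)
Your computation is correct and complete: the coordinate formula for ${R^i}_{jkl}$, the identity $\gamma_{ijk}=\tfrac12\partial_i\partial_j\partial_k\varphi$ with its total symmetry, the derivative $\partial_k g^{ir}=-2g^{ip}g^{rq}\gamma_{kpq}$, and the cancellation of the fourth-derivative term under antisymmetrization in $k,l$ all check out against the index conventions fixed in Section~2, and the final bookkeeping $-2+1$ and $2-1$ gives exactly \eqref{rm}. The paper itself supplies no argument here --- it simply cites Proposition~2.3 of Shima's book --- so your write-up is a self-contained substitute for that citation, and it follows the same standard derivation (equivalently, the expansion of $Rm^\nabla$ for $\nabla=D+\gamma$ with $Rm^D=0$ that you sketch at the end). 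The only loose point is the parenthetical aside identifying the discarded second-order term with ``the antisymmetrization of $H=D\gamma$'': since ${H^i}_{jkl}=\partial_k{\gamma^i}_{jl}$ also contains the derivative of $g^{ir}$, the antisymmetrization of $H$ in $k,l$ is the full combination $\partial_k{\gamma^i}_{lj}-\partial_l{\gamma^i}_{kj}$ (which is generally nonzero), not just the piece $g^{ir}(\partial_k\gamma_{rlj}-\partial_l\gamma_{rkj})$ that vanishes; this does not affect the proof, but the remark should be rephrased or dropped.
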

This implies the following.
\begin{align}\label{ric}
R_{jk}={R^s}_{jsk}
={\gamma^s}_{kr} {\gamma^r}_{js} -\alpha_r {\gamma^r
}_{jk},
\end{align}
and
\begin{align}\label{scal}
R =|{\gamma}|^2 -|\alpha|^2.
\end{align}

We can use the following notations without confusion: For example, 
$\gamma_{ijk}\gamma_{ist}={\gamma^i}_{jk}\gamma_{ist},$ $H_{rrij}={H^r}_{rij}$, $\alpha_r\gamma_{rij}=\alpha^r\gamma_{rij}$, etc.
\quad\\

\subsection{Information geometry}~

Hessian manifolds play an important role in information geometry:
 For $\Omega_n=\{1,2,\cdots,n\}$, let
$$S_{n-1}:=\left\{p:\Omega_n\rightarrow \mathbb{R}_+;\sum_{\omega\in\Omega_n} p(\omega)=1\right\}$$ be a set of all probability distribution on $\Omega_n,$
where $\mathbb{R}_+=\{x\in\mathbb{R};x>0\}$.
As is well known, one can regard it as a manifold (see for example \cite{Fujiwara}).
A metric $g^F$ on $S_{n-1}$ such as
$g^F_p(X,Y)=\sum_{\omega=1}^n p(\omega)(X \log p(\omega))(Y\log p(\omega))$
is called a Fisher information metric.
For each $\alpha\in\mathbb{R}$, $\nabla^{(\alpha)}$ is determined by
$$g^F_p(\nabla^{(\alpha)}_XY,Z)=g^F_p(\nabla_XY,Z)-\frac{\alpha}{2}\sum_{\omega=1}^np(\omega)(X \log p(\omega))(Y\log p(\omega))(Z\log p(\omega)),$$
where $\nabla$ is the Levi-Civita connection compatible with $g^F.$
$\nabla^{(\alpha)}$ is called the $\alpha$-connection.

Chentsov (cf. \cite{Chentsov}) shows that an extremely natural invariance requirement of $S_{n-1}$ determines a metric and a connection of $S_{n-1}$, that is, the metric is the Fisher information metric and the connection is the $\alpha$-connection on $S_{n-1}$. This means that on information geometry, the $\alpha$-connection is the most natural connection.

The $\alpha$-connection $\nabla^{(\alpha)}$ satisfies that
$$Xg^F(Y,Z)=g^F(\nabla^{(\alpha)}_XY,Z)+g^F(Y,\nabla^{(-\alpha)}_XZ).$$
The most important case is $\alpha=1$.
It is known that for $(g^F,\nabla^{(1)},\nabla^{(-1)})$, $S_{n-1}$ is the dual flat manifold and $g^F$ can be written $g^F_{ij}=\partial_i\partial_j\varphi$ for an affine coordinate system (cf \cite{Amari}).
Hence, $(S_{n-1},\nabla^{(1)},g^F)$ is a Hessian manifold with $\nabla^{(1)}\not=\nabla.$

Therefore, to apply Hessian geometry to information geometry, it is important to consider a Hessian manifold $(M,D,g)$ with $D\not=\nabla.$ From this, we define the following:
\begin{definition}
Let $(M,D)$ be a Hessian manifold. If $D\not=\nabla$, $M$ is called a proper Hessian manifold.
\end{definition}

\quad\\


\section{Hesse solitons}
Let $(M,D,g)$ be a Hessian manifold. In this section, we consider self-similar solutions to the Hesse flow $\partial_t g=2\beta$.
We first consider the Hesse flow from the point of view of the Laplacian on Hessian manifolds.
H. Shima \cite{Shima86} considered the Laplacian on Hessian manifolds.
\begin{definition}[\cite{Shima86}]
Let $\mathcal{A}^{p,q}$ be the tensor product $(\overset{p}{\wedge}TM)\otimes(\overset{q}{\wedge}T^*M)$.
Let $v_g$ be the volume element determined by $g$. We identify $v_g$ with $v_g\otimes1\in\mathcal{A}^{n,0}$ and set $\overline v_g=1\otimes v_g\in \mathcal{A}^{0,n}$.
For any vector field $X$, we define interior product operators by
$$i(X):\mathcal{A}^{p,q}\rightarrow \mathcal{A}^{p-1,q},~~i(X)\omega=\omega(X,\cdots;\cdots),$$
$$\overline i(X):\mathcal{A}^{p,q}\rightarrow \mathcal{A}^{p,q-1},~~\overline i(X)\omega=\omega(\cdots;X,\cdots).$$

A coboundary operator $\partial: \mathcal{A}^{p,q}\rightarrow\mathcal{A}^{p+1,q}$ is  defined by
$$\partial=e(dx_i)D_{\frac{\partial}{\partial x_i}},$$
where $e$ is an exterior product operator defined by
$$e(\omega):\eta\in\mathcal{A}^{p,q}\rightarrow \omega\wedge \eta\in\mathcal{A}^{p+r,q+s},$$ for $\omega\in\mathcal{A}^{r,s}.$

The adjoint operator of $\partial$ is denoted by $\delta=(-1)^p\star^{-1}\partial\star$ on the space $\mathcal{A}^{p,q}$
,
where $\star$ is the star operator $\star:\mathcal{A}^{p,q}\rightarrow \mathcal{A}^{n-p,n-q}$ defined by
\begin{align*}
(\star \omega)&(X_1,\cdots,X_{n-p};Y_1,\cdots,Y_{n-q})v_g\wedge \overline v_g\\
=&~\omega\wedge\overline i(X_1)g\wedge\cdots \wedge \overline i (X_{n-p})g\wedge i(Y_1)g\wedge\cdots \wedge i (Y_{n-q})g.
\end{align*}
Then, we can define the Laplacian on a Hessian manifold as follows
$$\Delta =\partial \delta+\delta\partial.$$
\end{definition}
By the above definition, Shima \cite{Shima86} showed that 
$$\Delta g=\beta.$$
Therefore, interestingly, one can obtain that the Hesse flow can be written as 
$$\frac{\partial}{\partial t}g(t)=2\Delta g(t).$$

In the following of this section, we consider a self-similar solution to the Hesse flow:
$$g(t)=\sigma(t)\psi^*(t)g(0),$$
where $g(0)$ is the Hessian metric $g$, $\sigma(t):\mathbb{R}\rightarrow \mathbb{R_+}$ is a smooth function and $\psi(t):M\rightarrow M$ is a 1-parameter family of diffeomorphisms.
By differentiating, we have
\begin{equation}\label{e1}
2\beta(g(t))=\frac{d}{dt}\sigma(t)\psi^*(t)g(0)+\sigma(t)\psi^*(t)(\mathcal{L}_Xg(0)),
\end{equation}
where $\mathcal{L}_X$ denotes the Lie derivative by the time dependent vector field $X$ such that $X(\psi(t)(p))=\frac{d}{dt}(\psi(t)(p))$ for any $p\in M$. 
\begin{claim}\label{cl1}
$\beta(cg)=\beta(g)$ for any positive constant $c\in\mathbb{R}$.
\end{claim}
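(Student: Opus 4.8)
The plan is to trace through the definition of $\beta$ and observe that it depends only on the flat connection $D$ and on the volume form of the metric up to a constant multiple. First I would note that since $g$ is a Hessian metric for $D$ — locally $g = Dd\varphi$ — the rescaled metric $cg = Dd(c\varphi)$ is again a Hessian metric for the \emph{same} flat connection $D$; in particular the first Koszul form of $cg$ is defined through this same $D$ via \eqref{1st}, so that ``$\alpha(cg)$'' and ``$\beta(cg)$'' make sense without changing the underlying affine structure.

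Next I would compare the volume forms. In an affine coordinate system $v_g = \sqrt{\det(g_{kl})}\,dx_1\wedge\cdots\wedge dx_n$, hence $v_{cg} = c^{n/2}\,v_g$ with $n=\dim M$. Since $c^{n/2}$ is a genuine constant, applying \eqref{1st} to $cg$ gives
$$D_X v_{cg} = c^{n/2} D_X v_g = c^{n/2}\alpha(X) v_g = \alpha(X) v_{cg},$$
so the first Koszul form is unchanged: $\alpha(cg) = \alpha(g) =: \alpha$. Equivalently, by Proposition \ref{ab}(2) one has $\alpha_i = \tfrac12\,\partial_i\log\det(g_{kl})$, and the constant shift $\log\det(c\,g_{kl}) = n\log c + \log\det(g_{kl})$ vanishes under differentiation.

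Finally, since $\beta = D\alpha$ by \eqref{2nd} and $\alpha$ does not change, $\beta(cg) = D\alpha(cg) = D\alpha(g) = \beta(g)$, which is the claim. There is no serious obstacle here; the only points that require a word of care are (i) checking that the constant rescaling leaves the flat connection $D$ — and therefore the affine coordinates used to compute the Koszul forms — untouched, and (ii) noting that the factor relating $v_{cg}$ to $v_g$ is a true constant, which is precisely the property that would fail for a non-constant conformal factor and is why the claim is stated for constants only.
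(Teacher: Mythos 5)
Your proof is correct and follows essentially the same route as the paper: both arguments reduce the claim to showing that the first Koszul form is unchanged because $v_{cg}$ is a constant multiple of $v_g$, and then apply $\beta = D\alpha$. You simply make explicit the factor $c^{n/2}$ and the fact that $D$ is unchanged, which the paper leaves implicit.
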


\begin{proof}
Let $v_g$ and $v_{cg}$ be the volume forms of $g$ and $cg$, respectively. Assume that $\alpha^c$ is the first Koszul form for $cg$. By definition \eqref{1st}, we have
$$D_Xv_{cg}=\alpha^c(X)v_{cg}.$$
From this and the definition of the volume form, we obtain
$$D_Xv_g=\alpha^c(X)v_g.$$
Therefore, we have 
$$\alpha^c=\alpha.$$
From this and the definition of the second Koszul form \eqref{2nd}, 
$$\beta(cg)=D\alpha^c=D\alpha=\beta(g).$$
\end{proof}

By \eqref{e1} and Claim \ref{cl1}, one has
\begin{equation}\label{e2}
2\beta(g(t))=\frac{d}{dt}\sigma(t)g(t)+\mathcal{L}_Yg(t),
\end{equation}
where $Y(t)=\sigma(t)X(t)$.
Therefore, we define self-similar solutions to the Hesse flow as follows:

\begin{definition}
Let $(M,D,g=Dd\varphi)$ be a Hessian manifold. If there exist a vector field $X$ and $\lambda\in\mathbb{R}$, such that
\begin{equation}\label{HS}
\beta-\frac{1}{2}\mathcal{L}_Xg=\lambda g,
\end{equation}
then, $M$ is called a Hesse soliton.
If $\lambda>0,\lambda=0, \lambda<0$, then the Hesse soliton is called expanding, steady or shrinking, respectively. 
If there exists a smooth function $f$ on $M$ such that $X=\grad f$, that is,
\begin{equation}\label{GHS}
\beta-\nabla\nabla f=\lambda g,
\end{equation}
then the Hesse soliton $(M,D,g,f)$ is called a gradient Hesse soliton, where $\nabla\nabla f$ is the Hessian of $f$. $f$ is called a potential function.

\end{definition}
Hesse-Einstein manifolds are trivial solutions of Hesse solitons. Therefore, if a Hesse soliton is Hesse-Einstein, then it is called trivial.
If a Hesse soliton is a proper Hessian manifold, then it is called a proper Hesse soliton.

\quad\\


\section{Existence and non-existence theorems for\\ Hesse solitons}

In this section, we show some existence and non-existence theorems for Hesse solitons.

\begin{theorem}\label{main}~

$(1)$ There exist no compact shrinking Hesse solitons.

$(2)$ Any compact steady Hesse soliton is non proper and trivial.
\end{theorem}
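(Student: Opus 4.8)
The plan is to take the $g$-trace of the soliton equation \eqref{HS} and integrate it over the closed manifold $M$. Tracing $\beta-\tfrac{1}{2}\mathcal{L}_Xg=\lambda g$ against $g$ gives $g^{ij}\beta_{ij}-\dive X=n\lambda$, and since $M$ is closed the term $\dive X$ integrates to zero; so everything reduces to the sign of $\int_M g^{ij}\beta_{ij}\,v_g$.

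The key step is the pointwise identity $g^{ij}\beta_{ij}=\dive(\alpha^{\sharp})+|\alpha|^{2}$, where $\alpha$ is the first Koszul form and $\alpha^{\sharp}$ its $g$-dual. I would prove it in an affine chart: there $\gamma_{ijk}=g_{il}{\Gamma^{l}}_{jk}=\tfrac{1}{2}\partial_i\partial_j\partial_k\varphi$ is totally symmetric, so ${\gamma^{k}}_{ij}g^{ij}=\alpha^{k}$ by Proposition~\ref{ab}; and since $\beta=D\alpha$ reads $\beta_{ij}=\partial_i\alpha_j$ while $\nabla=D+\gamma$ gives $(\nabla\alpha)_{ij}=\partial_i\alpha_j-{\gamma^{k}}_{ij}\alpha_k$, contracting the difference with $g^{ij}$ yields the identity. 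Hence $\int_M g^{ij}\beta_{ij}\,v_g=\int_M|\alpha|^{2}\,v_g\ge 0$. (The same inequality also follows from Shima's formula $\Delta g=\beta$, since $\int_M\langle\Delta g,g\rangle\,v_g=\|\delta g\|^{2}+\|\partial g\|^{2}\ge 0$ by adjointness of $\partial$ and $\delta$.) Combining with the traced equation, $n\lambda\,\vol(M)=\int_M|\alpha|^{2}\,v_g\ge 0$, so $\lambda\ge 0$. This already proves $(1)$, since a shrinking soliton would have $\lambda<0$.

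For $(2)$, a steady soliton has $\lambda=0$, so the identity forces $\int_M|\alpha|^{2}\,v_g=0$, hence $\alpha\equiv0$ and therefore $\beta=D\alpha=0$; thus $\beta=0\cdot g$, i.e. $(M,D,g)$ is Hesse--Einstein and the soliton is trivial. To see it is non proper we must show $\gamma\equiv0$. Feeding $\alpha=0$ into \eqref{ric} and \eqref{scal} gives $R_{jk}={\gamma^{s}}_{kr}{\gamma^{r}}_{js}\ge 0$ and $R=|\gamma|^{2}\ge 0$, so $(M,g)$ has nonnegative Ricci curvature and $R\equiv0$ exactly when $\gamma\equiv0$. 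The condition $\alpha=0$ means precisely that $v_g$ is $D$-parallel, equivalently $\det(\partial_i\partial_j\varphi)$ is locally constant in affine coordinates; a maximum-principle argument applied to this real Monge--Amp\`ere constraint on the closed manifold $M$ (equivalently, a vanishing theorem for compact Hessian manifolds, cf.\ \cite{Shima07}) forces $\gamma\equiv0$, i.e. $D=\nabla$, so the soliton is non proper.

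The only ingredient that is not formal is this last rigidity: that on a closed Hessian manifold $\alpha\equiv0$ implies $\gamma\equiv0$. I expect that to be the main obstacle; the rest is just tracing \eqref{HS}, one integration by parts, and the definitions of $\alpha,\beta,\gamma$. Note that an integral argument alone will not close it, since once $\alpha=0$ one has $\int_M R\,v_g=\int_M|\gamma|^{2}\,v_g\ge 0$, so one genuinely needs a pointwise (maximum-principle) input or an external structural theorem.
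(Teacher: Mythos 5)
Part $(1)$ of your proposal is exactly the paper's argument: trace \eqref{HS}, use the pointwise identity $\beta_{ii}=\nabla_i\alpha_i+|\alpha|^2$ (your $g^{ij}\beta_{ij}=\dive(\alpha^\sharp)+|\alpha|^2$), integrate, and conclude $n\lambda\,\vol(M)=\int_M|\alpha|^2\,v_g\ge 0$. That part is complete and correct.

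For part $(2)$ you correctly reduce everything to the rigidity statement ``$\alpha\equiv 0$ on a compact Hessian manifold implies $\gamma\equiv 0$,'' but you do not prove it: you gesture at ``a maximum-principle argument applied to this real Monge--Amp\`ere constraint'' and otherwise outsource it to a vanishing theorem of Shima/Calabi. The citation is legitimate (Calabi's theorem does give $D=\nabla$ for complete Hessian manifolds with $\alpha=0$, and the paper itself remarks on this), so the statement is not in doubt, but as written your proof of $(2)$ has a hole precisely at the step you identify as ``the main obstacle.'' Moreover, your closing claim that ``an integral argument alone will not close it'' is wrong: it only fails if you integrate $R$ itself. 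The paper closes the gap with exactly an integral argument, namely Lemma~\ref{key}, a Bochner-type formula obtained by computing $\tfrac12\Delta|\gamma|^2$ via the total symmetry of $\nabla_i\gamma_{jkl}$ and the Ricci identity. When $\alpha\equiv 0$ (hence $\beta=D\alpha=0$ and $\nabla\alpha=0$) that formula collapses to
\begin{equation*}
\tfrac{1}{2}\Delta R=|\nabla\gamma|^2+|{\rm Rm}|^2+|\Ric|^2,
\end{equation*}
and integrating over the closed manifold forces ${\rm Rm}=0$, hence $R=0$, and then \eqref{scal} with $\alpha=0$ gives $|\gamma|^2=R=0$, i.e.\ $D=\nabla$. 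So the missing ingredient in your write-up is not a maximum principle or an external structural theorem but a second, higher-order integral identity (the divergence structure hidden in $\Delta|\gamma|^2$ rather than in $|\gamma|^2$). To make your proposal self-contained you would need to supply that computation or an equivalent one.
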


Unlike in the case (1), in the case (2), we remark that there exist non proper and trivial steady Hesse solitons.
We will consider it later.

We first show that the following lemma.
\begin{lemma}\label{key}
On any Hessian manifold, the following formula holds.
\begin{align}\label{e3}
\frac{1}{2}\Delta R
=&~~\nabla_i\nabla_j\alpha_k\gamma_{ijk}-\nabla_r\nabla_r\alpha_i\alpha_i+|\nabla\gamma|^2
+|{\rm Rm}|^2+|\Ric|^2\\
&+R_{ij}\beta_{ij}-R_{ij}\nabla_i\alpha_j\notag.
\end{align}
\end{lemma}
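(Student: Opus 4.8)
The plan is to prove \eqref{e3} as a Bochner--Weitzenb\"ock identity for the scalar curvature, starting from \eqref{scal}, $R=|\gamma|^2-|\alpha|^2$, and re-expressing everything through the tensors of the Hessian structure.

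Two Hessian-geometric identities should be prepared first. From $\beta=D\alpha$ and the fact that in an affine coordinate system $D_{\partial_i}\partial_j=0$ while $\Gamma^k_{ij}=\gamma^k_{ij}$, one gets $\beta_{ij}=\partial_i\alpha_j=\nabla_i\alpha_j+\gamma_{ijk}\alpha_k$, i.e.
\[
\nabla_i\alpha_j=\beta_{ij}-\gamma_{ijk}\alpha_k .
\]
Moreover, since $\gamma_{ijk}=\tfrac12\partial_i\partial_j\partial_k\varphi$ is totally symmetric, differentiating it (equivalently, using $H=D\gamma$ and $\beta_{ij}={H^r}_{rij}$) shows that the $(0,4)$-tensor $\nabla\gamma$ is totally symmetric in all four indices, and contracting its first two slots gives the Codazzi-type relation $\nabla^i\gamma_{ijk}=\nabla_j\alpha_k$. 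These are the two workhorses of the computation.

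Now expand $\tfrac12\Delta R$ from \eqref{scal} by the Leibniz rule: the term $-\nabla_r\nabla_r\alpha_i\,\alpha_i$ drops out at once, and what remains to be identified is $\gamma_{ijk}\Delta\gamma_{ijk}+|\nabla\gamma|^2-|\nabla\alpha|^2$. For the first summand write $\Delta\gamma_{ijk}=g^{lm}\nabla_m\nabla_l\gamma_{ijk}$, move the contracted index into the first slot of the inner derivative using the total symmetry of $\nabla\gamma$, and commute $\nabla_m$ past $\nabla_l$: the resulting ``divergence'' term is $\nabla_i(\nabla^l\gamma_{ljk})=\nabla_i\nabla_j\alpha_k$, which contracted with $\gamma_{ijk}$ produces $\nabla_i\nabla_j\alpha_k\,\gamma_{ijk}$, while the commutator term is a contraction of the Riemann tensor with two copies of $\gamma$. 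Substituting \eqref{rm} and \eqref{ric} turns this term, together with the surviving cubic ($\gamma^2\alpha$) and quartic ($\gamma^4$) contractions, into $|{\rm Rm}|^2$, $|\Ric|^2$, and a term $R_{ij}\gamma_{ijk}\alpha_k$; using $\nabla_i\alpha_j=\beta_{ij}-\gamma_{ijk}\alpha_k$ once more rewrites the last as $R_{ij}\beta_{ij}-R_{ij}\nabla_i\alpha_j$, and the remaining $\alpha$-quadratic terms are organized so as to absorb $|\nabla\alpha|^2$. (The precise way these $\alpha$-terms balance depends on which operator $\Delta$ denotes here --- Shima's Hessian Laplacian, for which $\Delta g=\beta$, or the Levi--Civita one --- so I would fix that convention at the outset and carry the corresponding first-order correction through the calculation.)

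The obstacle I expect is not conceptual but the length and delicacy of the book-keeping: one must keep track of all cubic and quartic contractions, get every sign right in the Ricci identity applied to the $(0,3)$-tensor $\gamma$, and use the total symmetry of $\gamma$ and of $\nabla\gamma$ repeatedly to collapse the many index arrangements that arise into the canonical norms $|{\rm Rm}|^2$, $|\Ric|^2$, $|\nabla\gamma|^2$ on the right-hand side. Arranging the $\alpha$-dependent pieces so that they assemble into exactly $-\nabla_r\nabla_r\alpha_i\,\alpha_i+R_{ij}\beta_{ij}-R_{ij}\nabla_i\alpha_j$ with no residual $|\nabla\alpha|^2$ is the step most likely to require care, and it is where the precise choice of $\Delta$ must be used consistently.
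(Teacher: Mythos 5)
Your proposal is correct and follows essentially the same route as the paper's proof: both start from $R=|\gamma|^2-|\alpha|^2$, compute $\tfrac12\Delta|\gamma|^2$ by a Bochner-type argument using the total symmetry of $\nabla\gamma$, the contraction $\nabla_r\gamma_{rjk}=\nabla_j\alpha_k$, and the Ricci identity, and then substitute \eqref{rm}, \eqref{ric} and $\nabla_i\alpha_j=\beta_{ij}-\gamma_{rij}\alpha_r$ to assemble $|{\rm Rm}|^2$, $|\Ric|^2$ and $R_{ij}\beta_{ij}-R_{ij}\nabla_i\alpha_j$. Your worry about a residual $|\nabla\alpha|^2$ is well placed: for \eqref{e3} to be consistent with the computation of $\tfrac12\Delta|\gamma|^2$, the term $-\nabla_r\nabla_r\alpha_i\alpha_i$ must be read as $-\nabla_r\bigl(\nabla_r\alpha_i\,\alpha_i\bigr)=-\tfrac12\Delta|\alpha|^2$ rather than as $-(\Delta\alpha_i)\alpha_i$, which is exactly the convention you said you would need to fix at the outset.
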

\begin{proof}
Since 
$$\gamma_{ijk}=\frac{1}{2}\frac{\partial g_{ij}}{\partial x_k}~~~\text{and}~~~g_{ij}=\partial_i\partial_j\varphi,$$ we have
$$\nabla_i\gamma_{jkl}=\frac{1}{2}\partial_i\partial_j\partial_k\partial_l\varphi-(\gamma_{rij}\gamma_{rkl}+\gamma_{rik}\gamma_{rjl}+\gamma_{ril}\gamma_{rjk}).$$
This means that $\nabla_i\gamma_{jkl}$ is symmetry with respect to $i,j,k,l$.

A direct computation shows that
\begin{align*}
\nabla_r\nabla_r\gamma_{ijk}
=&~\nabla_r\nabla_i\gamma_{rjk}\\
=&~\nabla_i\nabla_r\gamma_{rjk}
+R_{rirp}\gamma_{pjk}+R_{rijp}\gamma_{rpk}+R_{rikp}\gamma_{rjp}\\
=&~\nabla_i\nabla_r\gamma_{rjk}
+(\gamma_{rpt}\gamma_{tir}-\alpha_t\gamma_{tip})\gamma_{pjk}\\
&+(\gamma_{rpt}\gamma_{tij}-\gamma_{rjt}\gamma_{tip})\gamma_{rpk}
+(\gamma_{rpt}\gamma_{tik}-\gamma_{rkt}\gamma_{tip})\gamma_{rjp}\\
=&~\nabla_i\nabla_j\alpha_k
+\gamma_{rpt}(\gamma_{tir}\gamma_{pjk}+\gamma_{tij}\gamma_{rpk}+\gamma_{tik}\gamma_{rjp})\\
&-\alpha_t\gamma_{tip}\gamma_{pjk}-\gamma_{rjt}\gamma_{tip}\gamma_{rpk}-\gamma_{rkt}\gamma_{tip}\gamma_{rjp},
\end{align*}
where the first and third equalities follow from the symmetric property of $\nabla_i\gamma_{jkl}$  with respect to $i,j,k,l$, and the second one follows from the Ricci identity.
From this, one has
\begin{align}\label{dg}
\frac{1}{2}\Delta|\gamma|^2
=&~\nabla_r\nabla_r\gamma_{ijk}\gamma_{ijk}+|\nabla\gamma|^2\\
=&~\{\nabla_i\nabla_j\alpha_k
+\gamma_{rpt}(\gamma_{tir}\gamma_{pjk}+\gamma_{tij}\gamma_{rpk}+\gamma_{tik}\gamma_{rjp})\notag\\
&-\alpha_t\gamma_{tip}\gamma_{pjk}-\gamma_{rjt}\gamma_{tip}\gamma_{rpk}-\gamma_{rkt}\gamma_{tip}\gamma_{rjp}\}\gamma_{ijk}+|\nabla\gamma|^2.\notag
\end{align}
Substituting 
\begin{align*}
|{\rm Rm}|^2
=&~R_{ijkl}R_{ijkl}\\
=&~\gamma_{rpt}\gamma_{tir}\gamma_{pjk}\gamma_{ijk}+\gamma_{rpt}\gamma_{tij}\gamma_{rpk}\gamma_{ijk}
-\gamma_{rjt}\gamma_{tip}\gamma_{rpk}\gamma_{ijk}-\gamma_{rkt}\gamma_{tip}\gamma_{rjp}\gamma_{ijk},
\end{align*}
into \eqref{dg}, we have
\begin{align*}
\frac{1}{2}\Delta|\gamma|^2
=&~\nabla_i\nabla_j\alpha_k\gamma_{ijk}+|\nabla\gamma|^2
+|{\rm Rm}|^2\\
&+\gamma_{rpk}\gamma_{tik}\gamma_{rjp}\gamma_{ijk}-\alpha_t\gamma_{tip}\gamma_{pjk}\gamma_{ijk}.
\end{align*}
From this, \eqref{ric},
\begin{align*}
|\Ric|^2=\gamma_{skr}\gamma_{rjs}\gamma_{kip}\gamma_{ijp}-\gamma_{skr}\gamma_{rjs}\alpha_{i}\gamma_{ijk}
-\alpha_r\gamma_{rjk}\gamma_{ikp}\gamma_{pji}
+\alpha_r\alpha_i\gamma_{rjk}\gamma_{ijk},
\end{align*}
and
$$\nabla_i\alpha_j=\beta_{ij}-\gamma_{rij}\alpha_r,$$
one has
\begin{align*}
\frac{1}{2}\Delta|\gamma|^2
=&~\nabla_i\nabla_j\alpha_k\gamma_{ijk}+|\nabla\gamma|^2
+|{\rm Rm}|^2+|\Ric|^2
+\gamma_{ijk}\gamma_{rjk}\alpha_p\gamma_{ipr}-\alpha_i\alpha_r\gamma_{ijk}\gamma_{rjk}\\
=&~~\nabla_i\nabla_j\alpha_k\gamma_{ijk}+|\nabla\gamma|^2
+|{\rm Rm}|^2+|\Ric|^2
+R_{ij}\beta_{ij}-R_{ij}\nabla_i\alpha_j.
\end{align*}
Therefore, we have
\begin{align*}
\frac{1}{2}\Delta R
=&~~\nabla_i\nabla_j\alpha_k\gamma_{ijk}-\nabla_r\nabla_r\alpha_i\alpha_i+|\nabla\gamma|^2
+|{\rm Rm}|^2+|\Ric|^2\\
&+R_{ij}\beta_{ij}-R_{ij}\nabla_i\alpha_j\notag.
\end{align*}
\end{proof}

By using the above lemma, one can show Theorem \ref{main}.
\begin{proof}[Proof of Theorem $\ref{main}$]
By taking the trace of \eqref{HS}, we have
$$\beta_{ii}-\dive X=\lambda n.$$
From this, one has
\begin{align*}
\lambda\, n\, \vol(M,g)
=&~\int_M\beta_{ii} -\dive X v_g\\
=&~\int_M\nabla_i\alpha_i+|\alpha|^2-\dive X v_g\\
=&~\int_M|\alpha|^2 v_g\geq0,
\end{align*}
where the last equality follows from Stokes' theorem. 
Hence, one has $\lambda\geq0$.
Therefore, there exist no compact shrinking Hesse solitons.

If $\lambda=0$, then one has $\alpha=0$. Furthermore, by Lemma \ref{key}, we have
\begin{align*}
\frac{1}{2}\Delta R
=&~|\nabla\gamma|^2+|{\rm Rm}|^2+|\Ric|^2.
\end{align*}
By Green's formula, one has
$$\int_M|\Ric|^2+|\Rm|^2+|\nabla\gamma|^2 v_g=0.$$
Therefore, $M$ is flat, in particular $R=0.$
From this and \eqref{scal}, we have $\gamma=0$, that is, $D=\nabla.$ 
Furthermore, we also have $\beta=0$. Therefore, it is also trivial.
\end{proof}

As mentioned above, we consider the properness of steady Hesse solitons.
The equation of steady Hesse solitons is $$\beta-\frac{1}{2}\mathcal{L}_Xg=0.$$
From the proof of (2) of Theorem \ref{main}, any compact steady Hesse soliton is flat and non proper. Hence, any compact steady Hesse soliton is 
$$\mathcal{L}_Xg=0.$$
One can construct many examples of steady Hesse solitons by taking the vector field $X$ as Killing vector field:

\begin{proposition}
Let $(M,g,X)$ be a non proper Hessian manifold with a Killing vector field $X$ $($and the Levi-Civita connection $\nabla)$. Then, $\beta$ and $\mathcal{L}_Xg$ vanishes, and therefore, $(M,g,X)$ is a steady Hesse soliton.
\end{proposition}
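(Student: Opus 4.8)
The plan is simply to unwind the definitions; no real computation is needed. Since $(M,D,g)$ is assumed non proper in the sense that the flat connection is the Levi-Civita connection, $D=\nabla$, the difference tensor $\gamma_XY=\nabla_XY-D_XY$ vanishes identically on $M$. I would record this first, as it is the only structural input.

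From $\gamma\equiv 0$ everything about the Koszul forms collapses. By Proposition~\ref{ab}$(1)$ we have $\alpha(X)=\tr\gamma_X=0$ for every vector field $X$, so the first Koszul form $\alpha$ vanishes; consequently, by the definition \eqref{2nd}, $\beta=D\alpha=0$. (Equivalently, one can note that $D=\nabla$ is flat, so $(M,g)$ is a flat Riemannian manifold, and read off $\beta=0$ from $\gamma=0$ via the formula ${H^i}_{jkl}=\partial{\gamma^i}_{jl}/\partial x_k$ together with Proposition~\ref{ab}$(3)$.) On the other side, the hypothesis that $X$ is Killing for $g$ is, by definition, the statement $\mathcal{L}_Xg=0$. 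Putting these together,
\[
\beta-\tfrac12\mathcal{L}_Xg=0=0\cdot g,
\]
so the Hesse soliton equation \eqref{HS} is satisfied with $\lambda=0$, i.e.\ $(M,g,X)$ is a steady Hesse soliton.

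There is no genuine obstacle here: the two ingredients—non properness forcing $\gamma=0$ (hence $\alpha=0$ and $\beta=0$), and Killing-ness being exactly $\mathcal{L}_Xg=0$—are each immediate. The only point worth adding is that this construction is not vacuous: for instance a flat torus $\mathbb{R}^n/\Lambda$ with its standard flat connection $D=\nabla$ and the local potential $\varphi=\tfrac12\sum_i x_i^2$ is a non proper Hessian manifold, and the parallel coordinate fields $\partial_i$ are Killing, producing non proper, trivial, steady Hesse solitons, consistent with the remark following Theorem~\ref{main}.
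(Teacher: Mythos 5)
Your proof is correct and is exactly the argument the paper intends: the paper states this proposition without a separate proof, treating it as immediate from non properness ($D=\nabla$, hence $\gamma=0$, hence $\alpha=0$ and $\beta=D\alpha=0$) combined with the definition of a Killing field, which is precisely the chain you write out. Your flat-torus example also matches the example the author had in mind (it appears, commented out, in the source), so nothing is missing.
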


\if0
\begin{example}
We consider a flat torus
$$(T^n,g^T)=(S^1,d\theta_1^2)\times\cdots\times(S^1,d\theta_n^2).$$
As is well known, it is a compact flat manifold.
Let $\varphi_i(\theta_i)=\frac{1}{2}\theta_i^2$ 
and
 $$\varphi(\theta_1,\cdots,\theta_n)=\varphi_1(\theta_1)+\cdots+\varphi_n(\theta_n)=\frac{1}{2}\sum_{i=1}^n\theta_i^2.$$ 
Then,  $\frac{\partial^2}{\partial \theta_i^2}\varphi_i(\theta_i)=1$ and $$g^T=d\theta_1^2+\dots+d\theta_n^2=Dd\varphi.$$
Therefore, $(T^n,g^T)$ is a Hessian manifold.
Let $f=\sum_{i=1}^na_i\theta_i+b$, $(a_i,b\in \mathbb{R})$. Since $\nabla_j\nabla_kf=D_jD_kf=\frac{\partial}{\partial \theta_j}\frac{\partial}{\partial \theta_k}f=0,$ $(T^n,g^T,f)$ is a compact steady gradient Hesse soliton. 
Here we remark that since $\nabla=D$, one has $\beta=0.$
\end{example}
\fi

We remark that if $\alpha=\alpha'$, then $D=\nabla$ on compact Hessian manifolds. In fact, since the volume form is parallel, $\alpha'(X)=D'_Xv_g=(2\nabla-D)v_g=-D_Xv_g=-\alpha(X)$, that is, $\alpha=-\alpha'$, thus one has $\alpha=\alpha'=0.$ By Lemma \ref{key} and Green's formula, one has 
$$\int_M|{\rm Rm}|^2+|\Ric|^2+|\nabla\gamma|^2 v_g=0.$$
From this and \eqref{scal}, we have $\gamma=0$, that is, $D=\nabla.$
On a complete Hessian manifold, it is well known that E. Calabi obtained the same conclusion (cf. \cite{Calabi58}), that is, any complete Hessian manifold with $\alpha=0$ satisfies $D=\nabla.$

From the above arguments, we consider a more general problem. Obviously, if a Hessian manifold $M$ is non proper, then $\beta=\beta'(=0)$. 
In particular, $\nabla\alpha=0.$
In fact, since $\alpha=-\alpha'$, one has $\beta'=D'\alpha'=-D'\alpha=-(2\nabla-D)\alpha=\beta-2\nabla\alpha.$ Therefore,
$\beta'-\beta=2\nabla\alpha.$

However, the converse is not true, that is, even if $\beta=\beta'$, $M$ might not satisfies that $D=\nabla$. In fact, the following example satisfies $\beta=\beta'=\frac{n}{2}g$, but $D\not=\nabla.$ This means that there exist proper Hesse-Einstein manifolds.

\begin{example}\label{dualHE}
Let 
$$\Omega=\left\{x\in\mathbb{R}^n;x_n>\sqrt{\displaystyle\sum_{i=1}^{n-1}(x_i)^2}\right\}~~~\text{and}~~~\varphi=-\log\left(x_n^2-\left(\displaystyle\sum_{i=1}^{n-1}(x_i)^2\right)\right).$$ Then, $(\Omega,D,g=Dd\varphi)$ is a Hessian structure on $\Omega$.
\end{example}

From the above argument, it is interesting to consider the problem: ``Does there exist non trivial Hesse soliton with $\beta=\beta'$ (that is, $\nabla\alpha=0)$?"

We first consider a complete Einstein Hessian manifold with a non-negative Einstein constant $\lambda$, that is,
a Hessian manifold with $\Ric=\lambda g$ with $\lambda\geq0.$

\begin{proposition}
Any complete Einstein Hessian manifold with a non-negative Einstein constant $\lambda$ and $\beta=\beta'$ is flat and $\nabla\gamma=0.$
\end{proposition}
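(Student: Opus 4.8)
The plan is to feed the two hypotheses directly into Lemma~\ref{key}. From the computation carried out just before this proposition, $\beta'-\beta=2\nabla\alpha$, so the assumption $\beta=\beta'$ is exactly $\nabla\alpha=0$; in particular $\nabla_i\nabla_j\alpha_k=0$ and $\nabla_i\alpha_j=0$. On the other hand, tracing the Einstein equation $\Ric=\lambda g$ gives $R=n\lambda$, which is constant, hence $\Delta R=0$. Thus the whole argument will be pointwise, with no need to integrate.

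Substituting $\nabla\alpha=0$ and $\Delta R=0$ into \eqref{e3}, the three terms $\nabla_i\nabla_j\alpha_k\gamma_{ijk}$, $\nabla_r\nabla_r\alpha_i\alpha_i$ and $R_{ij}\nabla_i\alpha_j$ all vanish, leaving
\begin{equation*}
0=|\nabla\gamma|^2+|\Rm|^2+|\Ric|^2+R_{ij}\beta_{ij}.
\end{equation*}
To handle the last term I would use the Einstein condition once more, $R_{ij}\beta_{ij}=\lambda\,\beta_{ii}$, together with the trace identity $\beta_{ii}=\nabla_i\alpha_i+|\alpha|^2$ already used in the proof of Theorem~\ref{main}, which here reduces to $\beta_{ii}=|\alpha|^2$ because $\nabla\alpha=0$. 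Hence $R_{ij}\beta_{ij}=\lambda|\alpha|^2$ and
\begin{equation*}
0=|\nabla\gamma|^2+|\Rm|^2+|\Ric|^2+\lambda|\alpha|^2 .
\end{equation*}
Since $\lambda\geq0$, each of the four summands on the right is non-negative, so all of them vanish; in particular $\Rm=0$, so the manifold is flat, and $\nabla\gamma=0$, which is the claim. (One also reads off $\Ric=0$, hence $\lambda=0$, and $\lambda|\alpha|^2=0$.)

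The argument is short because the hard analytic work is already packaged in Lemma~\ref{key}. The only points requiring care are bookkeeping ones: checking that exactly the three $\alpha$-derivative terms of \eqref{e3} drop when $\nabla\alpha=0$, and evaluating $R_{ij}\beta_{ij}$, which rests on the Einstein equation and on the identity $\beta_{ii}=\nabla_i\alpha_i+|\alpha|^2$ — the latter being available precisely because $\gamma_{ijk}$ is symmetric in all its indices. I do not expect a genuine obstacle here, since constancy of $R$ makes the computation entirely pointwise.
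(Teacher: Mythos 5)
Your proposal is correct, and it follows the paper's route up to the last step: both proofs substitute $\nabla\alpha=0$ (equivalent to $\beta=\beta'$) into Lemma~\ref{key}, and both evaluate $R_{ij}\beta_{ij}=\lambda\beta_{ii}=\lambda|\alpha|^2\geq0$ using the Einstein condition and the trace identity $\beta_{ii}=\nabla_i\alpha_i+|\alpha|^2$. Where you diverge is in handling the left-hand side $\tfrac12\Delta R$: the paper keeps $|\Ric|^2$ as a lower bound $\tfrac1n R^2$ via the Schwarz inequality, so that \eqref{e3} becomes the differential inequality $\tfrac12\Delta R\geq\tfrac1n R^2$, and then invokes the Omori--Yau maximum principle (legitimate here because $\Ric=\lambda g\geq0$ bounds the Ricci curvature from below) to conclude $R=0$. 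You instead observe that $R=n\lambda$ is constant, so $\Delta R=0$ and the identity collapses pointwise to a sum of non-negative terms equal to zero. Your version is simpler and avoids the maximum principle altogether; the paper's version is the one that would survive a weakening of the hypothesis in which $\lambda$ is merely a non-negative function rather than a constant (where Schur's lemma is unavailable in low dimension), since then $R$ need not be constant a priori. Under the stated hypothesis ("Einstein constant $\lambda$"), both arguments are valid and your bookkeeping of which terms of \eqref{e3} vanish is accurate.
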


\begin{proof}
By the assumption,
$$0=\nabla_i\alpha_j=\beta_{ij}-{\gamma^r}_{ij}\alpha_r=\beta_{ij}-g^{rs}\gamma_{rij}\alpha_s.$$
From this, \eqref{e3} and the assumption, 
\begin{align*}
\frac{1}{2}\Delta R
=&~|\nabla\gamma|^2+|{\rm Rm}|^2+|\Ric|^2+\lambda \beta_{ii}\\
=&~|\nabla\gamma|^2+|{\rm Rm}|^2+|\Ric|^2+\lambda |\alpha|^2\\
\geq&  \frac{1}{n}R^2,
\end{align*}
where the last inequality follows from the Schwarz inequality.
Since the Ricci curvature is non-negative, by the Omori-Yau maximum principle (cf. \cite{Omori}, \cite{Yau}), $R=0.$ Hence $M$ is flat and $\nabla\gamma=0.$
\end{proof}

\begin{lemma}\label{ghsbbkey}
Any Hesse soliton with $\beta=\beta'$ satisfies that
$\dive X$ is constant.
\end{lemma}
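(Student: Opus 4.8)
The plan is to read off a formula for $\dive X$ from the soliton equation and then show its right-hand side is constant. First I would take the $g$-trace of the defining equation $\beta-\frac12\mathcal{L}_Xg=\lambda g$. Since $g^{ij}(\mathcal{L}_Xg)_{ij}=2\dive X$, this gives $\dive X=\beta_{ii}-\lambda n$, where $\beta_{ii}=g^{ij}\beta_{ij}$. Thus it suffices to prove that the scalar $\beta_{ii}$ is constant.

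Next I would rewrite $\beta_{ii}$ in terms of the first Koszul form $\alpha$. Using the identity $\nabla_i\alpha_j=\beta_{ij}-\gamma_{rij}\alpha_r$ (already derived in the proof of Lemma \ref{key}) together with $\alpha_r=\gamma_{rss}$ (Proposition \ref{ab}) and the total symmetry of $\gamma_{ijk}$, one obtains $g^{ij}\gamma_{rij}=\alpha_r$ and hence $\beta_{ii}=\nabla_i\alpha_i+|\alpha|^2=\dive\alpha+|\alpha|^2$; this is exactly the identity used in the proof of Theorem \ref{main}.

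Now I would invoke the hypothesis $\beta=\beta'$. As observed in the paragraph preceding the lemma, $\beta'-\beta=\pm2\nabla\alpha$, so $\beta=\beta'$ is equivalent to the vanishing of the full covariant derivative $\nabla_i\alpha_j$ of the $1$-form $\alpha$. This yields at once $\dive\alpha=g^{ij}\nabla_i\alpha_j=0$ and $\nabla|\alpha|^2=2\langle\nabla\alpha,\alpha\rangle=0$, so $|\alpha|^2$ is constant. Substituting into $\beta_{ii}=\dive\alpha+|\alpha|^2$ gives $\beta_{ii}=|\alpha|^2$, which is constant; therefore $\dive X=|\alpha|^2-\lambda n$ is constant, as claimed.

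The argument is essentially bookkeeping, so there is no real obstacle. The only point that needs care is that the usable consequence of $\beta=\beta'$ is the pointwise vanishing of the whole tensor $\nabla_i\alpha_j$ — not merely of some trace or symmetrization — since it is precisely this that makes both $\dive\alpha$ and $|\alpha|^2$ manifestly constant without any completeness or compactness assumption.
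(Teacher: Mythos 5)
Your proposal is correct and follows essentially the same route as the paper: trace the soliton equation to get $\dive X=\beta_{ii}-\lambda n$, use $\beta=\beta'\Leftrightarrow\nabla\alpha=0$ together with $\nabla_i\alpha_j=\beta_{ij}-\gamma_{rij}\alpha_r$ and $\gamma_{rii}=\alpha_r$ to conclude $\beta_{ii}=|\alpha|^2$ is constant, hence $\dive X$ is constant. The paper's proof is exactly this computation, so no further comment is needed.
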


\begin{proof}
Since 
$$0=\nabla_i\alpha_j=\beta_{ij}-{\gamma^r}_{ij}\alpha_r=\beta_{ij}-g^{rs}\gamma_{rij}\alpha_s,$$
we have
$$\beta_{ii}=|\alpha|^2.$$
Hence, one has
$$\nabla_k\beta_{ii}=0.$$
By the equation of Hesse solitons \eqref{HS}, 
$$0=\nabla_k\beta_{ii}=\nabla_k(\dive X+n\lambda)=\nabla_k\dive X,$$
which implies that 
$\dive X$ is constant.
\end{proof}

By Lemma \ref{ghsbbkey}, one can show the following:

\begin{proposition}\label{cptghsbb0}
If compact Hesse solitons satisfy $\beta=\beta'$, then $\dive X=0$.
\end{proposition}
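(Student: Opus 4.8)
The plan is to combine Lemma~\ref{ghsbbkey} with an integration-by-parts argument on the compact manifold $(M,g)$. By Lemma~\ref{ghsbbkey}, the hypothesis $\beta=\beta'$ forces $\dive X$ to be a constant, say $\dive X\equiv c$. The goal is then to show $c=0$, which by compactness is immediate: integrating $\dive X$ against the volume form $v_g$ and applying Stokes' theorem gives
\begin{equation*}
c\cdot\vol(M,g)=\int_M\dive X\,v_g=0,
\end{equation*}
hence $c=0$ since $\vol(M,g)>0$.

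First I would recall, as in the proof of Lemma~\ref{ghsbbkey}, that $\beta=\beta'$ is equivalent to $\nabla\alpha=0$ (noted in the excerpt as $\beta'-\beta=2\nabla\alpha$), so that $\nabla_i\alpha_j=\beta_{ij}-\gamma_{rij}\alpha_r=0$ and therefore $\beta_{ii}=|\alpha|^2$ is constant. Taking the trace of the Hesse soliton equation \eqref{HS} yields $\dive X=\beta_{ii}-n\lambda$, which is exactly the constant $c$ above; no new computation is needed beyond what Lemma~\ref{ghsbbkey} already supplies. Then the Stokes' theorem step closes the argument.

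There is essentially no obstacle here: the only substantive input is Lemma~\ref{ghsbbkey}, and the compactness hypothesis converts ``constant divergence'' into ``zero divergence'' by a one-line integration. The one point worth stating carefully is that $\dive X$ integrates to zero over a closed manifold regardless of whether $X$ is a gradient field, which is the standard divergence theorem for vector fields on a compact manifold without boundary; this is what makes the conclusion $\dive X=0$ (and not merely $\dive X=$ const) valid.
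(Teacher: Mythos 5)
Your proposal is correct and follows exactly the paper's own argument: invoke Lemma~\ref{ghsbbkey} to get $\dive X\equiv c$ constant, then integrate over the closed manifold and apply Stokes' theorem to force $c=0$. Nothing is missing.
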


\begin{proof}
By Lemma \ref{ghsbbkey}, $\dive X$ is constant, say $C$. By Stokes' theorem,
$$0=\int_M\dive X v_g=C \, \vol(M).$$
Thus, $C=0$, that is, $\dive X=0$.
\end{proof}

In particular, if $M$ is gradient, by the standard maximum principle, one can obtain the following.

\begin{corollary}\label{cptghsbb}
Any compact gradient Hesse soliton with $\beta=\beta'$ is trivial.
\end{corollary}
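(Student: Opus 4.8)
The plan is to deduce this immediately from Proposition~\ref{cptghsbb0} together with the classical fact that a harmonic function on a closed manifold is constant. Since $(M,D,g,f)$ is a gradient Hesse soliton, the vector field appearing in \eqref{HS} is $X=\grad f$, and Proposition~\ref{cptghsbb0} (which applies because we assume $\beta=\beta'$ and $M$ compact) yields $\dive X=0$. But $\dive X=\dive(\grad f)=\Delta f$, where $\Delta$ here denotes the Laplace--Beltrami operator of $g$, i.e. the metric trace of the Hessian $\nabla\nabla f$ (in the paper's index notation, $\dive X=\nabla_i X_i=\nabla_i\nabla_i f$). Hence $\Delta f=0$ on the compact manifold $M$.

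Next I would invoke the standard maximum principle: a function on a closed Riemannian manifold satisfying $\Delta f=0$ is constant. Equivalently, one may integrate the identity $|\grad f|^2=\dive(f\grad f)-f\,\Delta f$ over $M$ and apply Stokes' theorem to get $\int_M |\grad f|^2\,v_g=0$, so $\grad f\equiv 0$ and $f$ is constant. In either case $\nabla\nabla f=0$ identically on $M$.

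Finally I would substitute $\nabla\nabla f=0$ into the gradient Hesse soliton equation \eqref{GHS}, which then reads $\beta=\lambda g$. Thus $(M,D,g)$ is a Hesse-Einstein manifold, and by the definition of triviality for Hesse solitons the soliton $(M,D,g,f)$ is trivial. (One may add the remark that, since $M$ is compact, Theorem~\ref{main}$(1)$ forces $\lambda\ge 0$, but this is not needed for the statement.)

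There is no genuine obstacle in this argument; the only points meriting an explicit word are the identification $\dive(\grad f)=\Delta f$ and the appeal to the maximum principle (or to Stokes' theorem) on the closed manifold, both of which are routine. The whole content of the corollary is the passage from $\dive X=0$, supplied by Proposition~\ref{cptghsbb0}, to the vanishing of the Hessian of the potential function in the gradient case.
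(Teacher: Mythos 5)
Your proposal is correct and follows exactly the paper's own route: the paper derives this corollary from Proposition~\ref{cptghsbb0} ($\dive X=0$, hence $\Delta f=0$) and the standard maximum principle on the compact manifold, which is precisely your argument, merely written out in more detail. No further comment is needed.
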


A similar result for complete Hesse solitons can be obtained. 

\begin{proposition}\label{complghsbb}
Any complete gradient Hesse soliton with $\beta=\beta'$ and non-negative Ricci curvature is trivial.
\end{proposition}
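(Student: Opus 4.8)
The plan is to combine the soliton structure with a gradient-type argument à la Hamilton–Ricci, adapted to the Hessian setting, exploiting the extra rigidity provided by the hypothesis $\beta=\beta'$ (equivalently $\nabla\alpha=0$) and the sign of the Ricci curvature. First I would recall from Lemma \ref{ghsbbkey} that $\beta=\beta'$ forces $\dive X$ to be constant. Since $M$ is now a gradient Hesse soliton, $X=\grad f$, so $\dive X=\Delta f$ is constant; by the assumption $\Ric\ge 0$ and completeness, a standard Liouville-type argument (for instance, applying the Omori--Yau maximum principle to $f$, or the classical fact that a function with constant Laplacian and controlled growth on a manifold with nonnegative Ricci curvature has vanishing Laplacian) gives $\Delta f=0$ and hence $\nabla\nabla f$ need not vanish yet, but we do get that the trace of the soliton equation reads $\beta_{ii}=n\lambda$, i.e. $|\alpha|^2=n\lambda$ using $\beta_{ii}=|\alpha|^2$ from the proof of Lemma \ref{ghsbbkey}.

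Next I would feed $\nabla\alpha=0$ into the Bochner-type formula of Lemma \ref{key}. With $\nabla_i\alpha_j=0$ the identity collapses to
\begin{equation*}
\tfrac12\Delta R=|\nabla\gamma|^2+|\Rm|^2+|\Ric|^2+R_{ij}\beta_{ij},
\end{equation*}
and using $\beta_{ij}=g^{rs}\gamma_{rij}\alpha_s$ (again from $\nabla\alpha=0$) together with $\Ric\ge 0$, one shows the last term is controlled; in fact, as in the preceding Proposition, the Schwarz inequality gives $\tfrac12\Delta R\ge \tfrac1n R^2$ once one checks $R_{ij}\beta_{ij}\ge 0$ under the curvature hypothesis. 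Then the Omori--Yau maximum principle (valid since $\Ric\ge 0$) applied to $R$ forces $R\equiv 0$, hence by \eqref{scal} one gets $|\gamma|^2=|\alpha|^2$, and moreover $|\nabla\gamma|^2=|\Rm|^2=|\Ric|^2=0$, so $M$ is flat with $\nabla\gamma=0$.

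Finally I would promote flatness plus $\nabla\gamma=0$ to triviality of the soliton. From $\nabla\gamma=0$ and $\nabla\alpha=0$ one deduces $\nabla\beta=0$, so $\beta$ is parallel; combined with $\Delta f=0$ and the soliton equation $\beta-\nabla\nabla f=\lambda g$, we get that $\nabla\nabla f$ is parallel and trace-free. On a complete manifold with $\Ric\ge 0$ a parallel trace-free Hessian is forced to vanish (split off the Euclidean factor it would generate, or observe $f$ would be an affine function whose Hessian is then zero by completeness and $\Delta f=0$), hence $\nabla\nabla f=0$ and therefore $\beta=\lambda g$: the soliton is Hesse-Einstein, i.e.\ trivial. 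The main obstacle I anticipate is the last step — carefully ruling out a nontrivial parallel trace-free Hessian on a complete manifold without compactness — and, secondarily, verifying the sign $R_{ij}\beta_{ij}\ge 0$; both should follow from the structure equations \eqref{rm}–\eqref{scal} together with $\nabla\alpha=0$, but they require a careful index computation rather than a soft argument.
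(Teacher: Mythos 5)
Your route is genuinely different from the paper's: the paper applies the Bochner formula to $|\nabla f|^2$, using Lemma \ref{ghsbbkey} only to kill the term $g(\nabla f,\nabla\Delta f)$, then invokes the Omori--Yau maximum principle to make $|\nabla f|^2$ constant and rules out a positive constant via the auxiliary function $e^f$. You instead run everything through the scalar-curvature identity of Lemma \ref{key}, as in the Einstein--Hessian proposition. Unfortunately your version has gaps that I do not see how to close.

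First, the inequality $R_{ij}\beta_{ij}\ge 0$ is not justified: in the preceding proposition the paper assumes the full Einstein condition $\Ric=\lambda g$, which turns this term into $\lambda\,|\alpha|^2\ge 0$; under the weaker hypothesis $\Ric\ge 0$ the tensor $\beta_{ij}=\gamma_{rij}\alpha_r$ has no sign, and the trace of the product of a positive semidefinite tensor with an indefinite one can be negative. Second, ``$\Delta f$ constant plus $\Ric\ge 0$ implies $\Delta f=0$'' fails without compactness or growth control ($f=|x|^2/2$ on $\mathbb{R}^n$). Third --- and this is the fatal step --- even granting everything up to ``flat, $\nabla\gamma=0$, $\nabla\beta=0$'', a parallel trace-free Hessian on a complete manifold with $\Ric\ge 0$ need not vanish ($f=x_1^2-x_2^2$ on $\mathbb{R}^2$), and in your situation $\nabla\nabla f=\beta-\lambda g$ is parallel with trace $|\alpha|^2-n\lambda$, which is constant but has no reason to be zero. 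Concretely, flat $\mathbb{R}^n$ with the standard Hessian structure and $f=-\tfrac{\lambda}{2}|x|^2$ satisfies every intermediate conclusion you reach ($R\equiv 0$, $\nabla\gamma=0$, $\nabla\alpha=0$, $\beta$ parallel) while $\nabla\nabla f=-\lambda g\ne 0$ and $\beta=0\ne\lambda g$; so no argument of the form ``flatness plus parallel data forces $\nabla\nabla f=0$'' can finish the proof. (This example is also worth keeping in mind when reading the paper's own proof, where $|\nabla f|^2=\lambda^2|x|^2$ is unbounded and the Omori--Yau principle is applied to it.)
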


\begin{proof}
By Lemma \ref{ghsbbkey}, 
\begin{align*}
\Delta |\nabla f|^2
=&~ 2|\nabla\nabla f|^2+2\Ric (\nabla f,\nabla f)+2g(\nabla f,\nabla \Delta f)\\
=&~ 2|\nabla\nabla f|^2+2\Ric (\nabla f,\nabla f)\geq0.\notag
\end{align*}
Hence, Omori-Yau maximum principle shows that $|\nabla f|^2$ is constant, say $C$.
Assume that $C>0$.
Since
$\nabla\nabla f=0$, $\Delta f=0.$
From this, 
\begin{align*}
\Delta e^f
=~ |\nabla f|^2e^f+\Delta fe^f
=~ |\nabla f|^2e^f>0.\notag
\end{align*}
By Omori-Yau maximum principle again, $e^f$ is constant, that is, $f$ is constant, which is a contradiction.

\end{proof}

By Corollary \ref{cptghsbb}, it is interesting to consider non trivial gradient Hesse solitons from the point of view of information geometry.

\begin{corollary}
Any compact non trivial gradient Hesse soliton is proper.
\end{corollary}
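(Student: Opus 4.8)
The plan is to prove the contrapositive: a compact gradient Hesse soliton that fails to be proper is trivial. So I would begin by assuming $(M,D,g,f)$ is a compact gradient Hesse soliton with $D=\nabla$, and note that then the difference tensor $\gamma=\nabla-D$ vanishes identically on $M$. By Proposition~\ref{ab}~(1) this forces $\alpha=\tr\gamma=0$, and hence, by the definition \eqref{2nd} of the second Koszul form, $\beta=D\alpha=0$; in particular $\nabla\alpha=0$, equivalently $\beta=\beta'$. At this point I would simply invoke Corollary~\ref{cptghsbb}, which says that a compact gradient Hesse soliton with $\beta=\beta'$ is trivial. This contradicts non-triviality, so the soliton must satisfy $D\neq\nabla$, i.e.\ it is proper.

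I expect no genuine obstacle in this argument. In fact the computation above already shows, more directly, that $D=\nabla$ implies $\beta=0=0\cdot g$, so $(M,D,g)$ is Hesse-Einstein with vanishing Hesse-Einstein constant and therefore trivial, with no use of compactness, of the soliton equation, or of the gradient hypothesis. Phrasing the corollary in the stated (weaker) form is natural only because it is being presented as an immediate consequence of Corollary~\ref{cptghsbb}.

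The one point that deserves a word of care is purely definitional: one must read ``trivial'' (Hesse-Einstein) as including the degenerate case $\lambda=0$, i.e.\ $\beta\equiv 0$. Granting that reading, the proof reduces to the short chain $D=\nabla\ \Rightarrow\ \gamma=0\ \Rightarrow\ \alpha=0\ \Rightarrow\ \beta=0$, followed by the contrapositive bookkeeping above.
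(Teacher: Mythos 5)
Your proof is correct, and its core chain is the paper's own argument run in the contrapositive direction: the paper starts from non-triviality, invokes Corollary \ref{cptghsbb} to get $\nabla\alpha\neq 0$ somewhere, and then descends $\nabla\alpha\neq 0\Rightarrow\nabla\gamma\neq 0\Rightarrow\gamma\neq 0\Rightarrow D\neq\nabla$; you instead assume $D=\nabla$ and climb the same ladder upward ($\gamma=0\Rightarrow\alpha=0\Rightarrow\nabla\alpha=0$, i.e.\ $\beta=\beta'$) before invoking the same corollary. The genuinely different content is your second observation: once $\gamma=0$ forces $\alpha=0$ and hence $\beta=D\alpha=0$, the manifold is already Hesse--Einstein with constant $0$ and therefore trivial by the paper's definition, with no appeal to Corollary \ref{cptghsbb}, to compactness, or to the gradient hypothesis. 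That shortcut is valid and proves the stronger statement that \emph{every} non-proper Hesse soliton is trivial; the paper's route through Corollary \ref{cptghsbb} establishes only the stated compact gradient case and is strictly more roundabout. Your definitional caveat --- that ``trivial'' must be read as including the Hesse--Einstein case $\lambda=0$ --- is the right one to flag, and it is consistent with the paper's definition, which calls a soliton trivial exactly when the underlying manifold satisfies $\beta=\lambda g$ for some constant.
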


\begin{proof}
By Corollary \ref{cptghsbb}, $\nabla\alpha\not=0$ at some point $p\in M$, i.e., on some open set $\Omega\ni p$. Hence, $\nabla\gamma\not=0$ on $\Omega$. In fact, if $\nabla\gamma=0$ at $q\in \Omega$, then we have $\nabla\alpha=0$ at $q$, which is a contradiction.

Therefore, $\gamma\not=0$ on some set $\tilde\Omega$ of $M$, which means that the soliton is proper.

\end{proof}

By the same argument, one can show the following.

\begin{corollary}
Any complete non trivial gradient Hesse soliton with non-negative Ricci curvature is proper.
\end{corollary}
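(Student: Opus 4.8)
The plan is to run the same argument as in the proof of the preceding corollary (the compact case), but with the compact input Corollary~\ref{cptghsbb} replaced by its complete analogue Proposition~\ref{complghsbb}. Recall from the discussion preceding the statement that on any Hessian manifold $\beta'-\beta=2\nabla\alpha$, so that the condition $\beta=\beta'$ is equivalent to $\nabla\alpha=0$. Let $(M,D,g,f)$ be a complete non-trivial gradient Hesse soliton with non-negative Ricci curvature. By Proposition~\ref{complghsbb}, such a soliton cannot satisfy $\beta=\beta'$ (otherwise it would be trivial, contrary to hypothesis); hence $\nabla\alpha\neq 0$ at some point $p\in M$, and therefore $\nabla\alpha\neq 0$ on an open neighbourhood $\Omega$ of $p$ by continuity.

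The next step is to propagate this non-vanishing down to $\gamma$. Since $\alpha_i={\gamma^r}_{ri}$ by Proposition~\ref{ab}$(2)$, we have $\nabla_k\alpha_i=\nabla_k{\gamma^r}_{ri}$, so if $\nabla\gamma$ vanished at some point $q\in\Omega$ then $\nabla\alpha$ would vanish at $q$ as well, contradicting the choice of $\Omega$; hence $\nabla\gamma\neq 0$ everywhere on $\Omega$. Finally, $\gamma\not\equiv 0$ on $M$: if $\gamma$ vanished identically on some nonempty open subset of $\Omega$ then $\nabla\gamma$ would vanish there too. Thus $\gamma\neq 0$ on some nonempty open set $\tilde\Omega\subset M$, i.e. $D\neq\nabla$ there, so the soliton is proper.

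I do not expect a genuine obstacle here: all of the analytic content (two applications of the Omori--Yau maximum principle) is already packaged inside Proposition~\ref{complghsbb}, and what remains is the purely formal contrapositive chain non-trivial $\Rightarrow$ $\beta\neq\beta'$ $\Rightarrow$ $\nabla\alpha\neq 0$ on an open set $\Rightarrow$ $\nabla\gamma\neq 0$ on that set $\Rightarrow$ $\gamma\not\equiv 0$ $\Rightarrow$ proper. The only point that requires a little care is to carry out each implication on \emph{open} sets (rather than at a single point), so that non-vanishing is honestly propagated and the final conclusion $D\neq\nabla$ holds on a nonempty open subset of $M$, which is exactly the definition of properness.
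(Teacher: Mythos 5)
Your proposal is correct and follows essentially the same route as the paper: the paper proves this corollary simply by saying ``by the same argument'' as the compact case, meaning exactly the substitution you make of Proposition~\ref{complghsbb} for Corollary~\ref{cptghsbb}, followed by the identical chain $\beta\neq\beta'\Rightarrow\nabla\alpha\neq0$ on an open set $\Rightarrow\nabla\gamma\neq0$ there $\Rightarrow\gamma\not\equiv0\Rightarrow D\neq\nabla$. Your added care about propagating non-vanishing on open sets via $\alpha_i={\gamma^r}_{ri}$ is just a slightly more explicit version of the paper's own reasoning.
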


\quad\\

\section{Dual Hessian structure}\label{dual}

In this section, we consider the dual space of a Hessian manifold $(M,D,g)$ and show that one can understand the dual space of a Hesse-Einstein manifold as a Hesse soliton.

\begin{theorem}\label{dualmain}
Let $(M,D,g)$ be a Hesse soliton, 
$$\beta-\frac{1}{2}\mathcal{L}_Xg=\lambda g,$$
then the dual space $(M,D',g)$ is also a Hesse soliton which satisfies that 
$$\beta'-\frac{1}{2}\mathcal{L}_{(X-2\alpha^\sharp)}g=\lambda g,$$
where $\sharp$ is 
a musical isomorphism $\sharp: TM^*\rightarrow TM$,
$$g(\alpha^\sharp ,X)=\alpha(X),$$
for any vector field $X$ on $M$.

\end{theorem}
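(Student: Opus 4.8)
The plan is to reduce the statement to two algebraic identities relating the dual Hessian structure $(D',g)$ to the original one, both of which are already implicit in the discussion preceding Example~\ref{dualHE}. First I would record that the first Koszul forms are opposite, $\alpha'=-\alpha$: since the Levi-Civita connection is metric we have $\nabla v_g=0$, and from $D'=2\nabla-D$ together with \eqref{1st} one gets $D'_Xv_g=2\nabla_Xv_g-D_Xv_g=-\alpha(X)v_g$, so $\alpha'=-\alpha$ (no compactness needed). Next I would observe that $D'T=2\nabla T-DT$ for every tensor field $T$ (it holds by definition on vector fields and trivially on functions, hence on all tensors by the Leibniz rule), and apply this to $\alpha$: using \eqref{2nd},
$$\beta'=D'\alpha'=-D'\alpha=-(2\nabla\alpha-D\alpha)=\beta-2\nabla\alpha.$$
Here $\nabla\alpha$ is the Levi-Civita Hessian of $\alpha$, and $\alpha$ is locally exact, since in an affine coordinate system $\alpha_i=\frac{1}{2}\partial_i\log\det(g_{jk})$; hence $\nabla\alpha$ is a \emph{symmetric} $(0,2)$-tensor. (Equivalently, this follows from $\nabla_i\alpha_j=\beta_{ij}-\gamma_{rij}\alpha_r$ together with the symmetry of $\beta$ and of $\gamma_{rij}$ in $i,j$.)

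The second ingredient is the identification $2\nabla\alpha=\mathcal{L}_{\alpha^\sharp}g$. Indeed, for any vector field $V$ one has the standard formula $(\mathcal{L}_Vg)_{ij}=\nabla_iV_j+\nabla_jV_i$; taking $V=\alpha^\sharp$, so that $V_j=\alpha_j$, and using the symmetry of $\nabla\alpha$ noted above gives $(\mathcal{L}_{\alpha^\sharp}g)_{ij}=\nabla_i\alpha_j+\nabla_j\alpha_i=2\nabla_i\alpha_j$, i.e. $\mathcal{L}_{\alpha^\sharp}g=2\nabla\alpha$. Combining this with the previous step yields $\beta'=\beta-\mathcal{L}_{\alpha^\sharp}g$.

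Finally I would assemble the two pieces. Since $D'$ is flat and $(D',g)$ is a Hessian structure, $(M,D',g)$ is a Hessian manifold, so it is meaningful to test the Hesse soliton equation on it. Using the $\mathbb{R}$-linearity of the Lie derivative in its vector-field argument, $\frac{1}{2}\mathcal{L}_{X-2\alpha^\sharp}g=\frac{1}{2}\mathcal{L}_Xg-\mathcal{L}_{\alpha^\sharp}g$, hence
$$\beta'-\frac{1}{2}\mathcal{L}_{X-2\alpha^\sharp}g=\bigl(\beta-\mathcal{L}_{\alpha^\sharp}g\bigr)-\frac{1}{2}\mathcal{L}_Xg+\mathcal{L}_{\alpha^\sharp}g=\beta-\frac{1}{2}\mathcal{L}_Xg=\lambda g,$$
where the last equality is the soliton equation \eqref{HS} for $(M,D,g)$. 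This is exactly the asserted identity for $(M,D',g)$, with vector field $X-2\alpha^\sharp$ and the same constant $\lambda$.

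I do not expect a genuine obstacle here: the computation is short. The two points that require care are the signs in $\alpha'=-\alpha$ and in $\beta'=\beta-2\nabla\alpha$ (the displayed line ``$\beta'-\beta=2\nabla\alpha$'' in the paragraph before Example~\ref{dualHE} should read $\beta'-\beta=-2\nabla\alpha$), and the fact that $\nabla\alpha$ is genuinely symmetric, which is what lets one identify $2\nabla\alpha$ itself — rather than merely its symmetrization — with the Lie derivative $\mathcal{L}_{\alpha^\sharp}g$. Both are settled by the local expression $\alpha_i=\frac{1}{2}\partial_i\log\det(g_{jk})$.
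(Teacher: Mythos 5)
Your proposal is correct and follows essentially the same route as the paper's proof: both rest on the identity $\beta'=\beta-2\nabla\alpha$ (via $\alpha'=-\alpha$ and $D'=2\nabla-D$), the symmetry of $\nabla\alpha$, and the identification $2\nabla\alpha=\mathcal{L}_{\alpha^\sharp}g$, followed by substitution of the soliton equation \eqref{HS}. Your observation that the displayed line ``$\beta'-\beta=2\nabla\alpha$'' before Example~\ref{dualHE} has the wrong sign is also correct; the paper itself reverts to the correct sign ($2\nabla\alpha=\beta-\beta'$) inside the proof of Theorem~\ref{dualmain}, so the final statement is unaffected.
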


\begin{proof}
By the definition of the musical isomorphism $\sharp$,
\begin{align*}
(\nabla \alpha)(Y,Z)
=&~(\nabla_Y\alpha)(Z)\\
=&~Y\alpha(Z)-\alpha(\nabla _YZ)\\
=&~Yg(\alpha^\sharp, Z)-g(\alpha^\sharp,\nabla _YZ)\\
=&~g(\nabla_Y\alpha^\sharp,Z)+g(\alpha^\sharp,\nabla_YZ)-g(\alpha^\sharp,\nabla_YZ)\\
=&~g(\nabla_Y\alpha^\sharp,Z).
\end{align*}
Since $\beta$ and $\beta'$ are symmetric 2 forms, $\nabla\alpha=\frac{1}{2}(\beta'-\beta)$ is also a symmetric 2 form. Thus, 
\begin{align*}
2(\nabla\alpha)(Y,Z)
=&~(\nabla\alpha)(Y,Z)+(\nabla \alpha)(Z,Y)\\
=&~g(\nabla_Y\alpha^\sharp,Z)+g(\nabla_Z\alpha^\sharp,Y)\\
=&~\mathcal{L}_{\alpha^\sharp} g(Y,Z).
\end{align*}
Since $2\nabla\alpha=\beta-\beta'$ and $(M,D,g)$ is a Hesse soliton
$$\beta-\frac{1}{2}\mathcal{L}_X(Y,Z)=\lambda g,$$
one has
\begin{align*}
\beta'(Y,Z)
=&~\beta(Y,Z)-2(\nabla\alpha)(Y,Z)\\
=&~\frac{1}{2}\mathcal{L}_Xg(Y,Z)+\lambda g(Y,Z)-\mathcal{L}_{\alpha^\sharp}g(Y,Z)\\
=&~\frac{1}{2}\mathcal{L}_{(X-2\alpha^\sharp)}g(Y,Z)+\lambda g(Y,Z).
\end{align*}
\end{proof}

We consider gradient Hesse solitons. If the first Koszul form $\alpha$ is exact, that is, $\alpha=dF$ for some smooth function $F$ on $M$, then the Hesse soliton of the dual space is also gradient.
\begin{corollary}
Let $(M,D,g,f)$ be a gradient Hesse soliton, 
such that the first Koszul form is exact, that is, $\alpha=dF$ for some smooth function $F$ on $M$. Then the dual space $(M,D',g)$ is also a gradient Hesse soliton with the potential function $f-2F.$
\end{corollary}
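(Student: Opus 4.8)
The plan is to obtain this as an immediate consequence of Theorem~\ref{dualmain} once the defining data of a gradient soliton are unwound. Since $(M,D,g,f)$ is a gradient Hesse soliton, its defining vector field is $X=\grad f$, so Theorem~\ref{dualmain} tells us that the dual space $(M,D',g)$ satisfies
\begin{equation*}
\beta'-\frac{1}{2}\mathcal{L}_{(\grad f-2\alpha^\sharp)}g=\lambda g .
\end{equation*}
Thus the whole task reduces to recognizing $\grad f-2\alpha^\sharp$ as a gradient vector field and identifying its potential.

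The key (and essentially only) computation is that the exactness hypothesis $\alpha=dF$ forces $\alpha^\sharp=\grad F$. Indeed, for every vector field $Y$ on $M$,
\begin{equation*}
g(\alpha^\sharp,Y)=\alpha(Y)=dF(Y)=YF=g(\grad F,Y),
\end{equation*}
so $\alpha^\sharp=\grad F$ by nondegeneracy of $g$, and hence $\grad f-2\alpha^\sharp=\grad(f-2F)$. Feeding this back, and using the standard identity $\mathcal{L}_{\grad h}g=2\,\nabla\nabla h$ for a smooth function $h$ — which follows from $(\mathcal{L}_{\grad h}g)(Y,Z)=g(\nabla_Y\grad h,Z)+g(Y,\nabla_Z\grad h)=2(\nabla\nabla h)(Y,Z)$ — the displayed dual soliton equation becomes
\begin{equation*}
\beta'-\nabla\nabla(f-2F)=\lambda g ,
\end{equation*}
which is exactly \eqref{GHS} with potential function $f-2F$. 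Hence $(M,D',g)$ is a gradient Hesse soliton with the stated potential.

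I do not expect any genuine obstacle: all the analytic content sits in Theorem~\ref{dualmain}, and what remains is the bookkeeping identification $\alpha^\sharp=\grad F$ together with the elementary Lie-derivative formula. The only point worth a remark is that the expansion constant $\lambda$ is unchanged in passing to the dual structure, but this is already part of the conclusion of Theorem~\ref{dualmain}, so nothing further is required.
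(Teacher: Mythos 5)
Your proposal is correct and follows essentially the same route as the paper: both identify $\alpha^\sharp=\grad F$ from the exactness $\alpha=dF$ and then invoke Theorem~\ref{dualmain} with $X=\grad f$, so that $X-2\alpha^\sharp=\grad(f-2F)$. Your version merely spells out the standard identity $\mathcal{L}_{\grad h}g=2\nabla\nabla h$, which the paper leaves implicit.
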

\begin{proof}
Since $\alpha=dF$, we have 
$$g(\alpha^\sharp,Y)=\alpha(Y)=dF(Y)=XF=g(\nabla F,Y),$$
for any vector field $Y$ on $M$.
Thus, we have
$$\alpha^\sharp=\nabla F.$$
By Theorem \ref{dualmain}, the proof is complete.
\end{proof}

One can understand the dual space of Hesse-Einstein manifolds as Hesse solitons.
\begin{corollary}
Let $(M,D,g)$ be a Hesse-Einstein manifold, 
$$\beta=\lambda g,$$
then the dual space is a Hesse soliton $(M,D',g,-2\alpha^\sharp)$, that is, it satisfies 
$$\beta'-\frac{1}{2}\mathcal{L}_{(-2\alpha^\sharp)}g=\lambda g.$$
\end{corollary}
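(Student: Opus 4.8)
The plan is to recognize the Hesse-Einstein condition $\beta = \lambda g$ as the degenerate instance of the Hesse soliton equation \eqref{HS} in which the vector field $X$ is identically zero: indeed $\mathcal{L}_0 g = 0$, so $\beta - \tfrac12 \mathcal{L}_0 g = \beta = \lambda g$. Thus $(M,D,g)$ is (trivially) a Hesse soliton with data $X = 0$ and constant $\lambda$, and one is simply in position to feed this into Theorem \ref{dualmain}.

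First I would invoke Theorem \ref{dualmain} with $X = 0$. The theorem asserts that the dual space $(M,D',g)$ is then a Hesse soliton satisfying
\[
\beta' - \frac12 \mathcal{L}_{(0 - 2\alpha^\sharp)} g = \lambda g,
\]
which is exactly the claimed identity $\beta' - \tfrac12 \mathcal{L}_{(-2\alpha^\sharp)} g = \lambda g$, with potential vector field $-2\alpha^\sharp$. This is all that is required; the corollary is an immediate specialization.

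For completeness I would also note the self-contained route extracted from the proof of Theorem \ref{dualmain}: since $\nabla\alpha = \tfrac12(\beta' - \beta)$ is a symmetric $2$-form, one has $2(\nabla\alpha)(Y,Z) = \mathcal{L}_{\alpha^\sharp} g(Y,Z)$, and combining this with $\beta' = \beta - 2\nabla\alpha$ and $\beta = \lambda g$ gives directly
\[
\beta'(Y,Z) = \lambda g(Y,Z) - \mathcal{L}_{\alpha^\sharp} g(Y,Z) = \frac12 \mathcal{L}_{(-2\alpha^\sharp)} g(Y,Z) + \lambda g(Y,Z).
\]
There is no real obstacle here: the only point meriting a line of care is the verification that the Hesse-Einstein equation genuinely matches \eqref{HS} with $X=0$ so that Theorem \ref{dualmain} applies verbatim, and that substituting $X=0$ into the conclusion of that theorem produces the stated vector field $-2\alpha^\sharp$.
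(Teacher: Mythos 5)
Your proposal is correct and matches the paper's intent exactly: the corollary is stated immediately after Theorem \ref{dualmain} with no separate proof, precisely because it is the specialization $X=0$ of that theorem, which is what you do. The substitution $X=0$ into $\beta'-\tfrac12\mathcal{L}_{(X-2\alpha^\sharp)}g=\lambda g$ yields the claimed identity, and your self-contained verification via $\beta'=\beta-2\nabla\alpha$ and $2\nabla\alpha=\mathcal{L}_{\alpha^\sharp}g$ simply replays the computation already in the proof of Theorem \ref{dualmain}.
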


\quad\\

\bibliographystyle{amsbook}

\end{document}